\newcommand{\haf}{{\frac{1}{2}}}
\newcommand{\holo}{\mathcal{O}}
\newcommand{\C}{{\mathbb C}}
\newcommand{\newsection}[1]
{\subsection{#1}\setcounter{theorem}{0} \setcounter{equation}{0}
\par\noindent}
\newtheorem{theorem}{Theorem}
\newtheorem{lemma}[theorem]{Lemma}
\newtheorem{proposition}[theorem]{Proposition}
\newtheorem{deff}[theorem]{Definition}
\newcommand{\eprop}{\end{proposition}}
\newcommand{\Sum}{\displaystyle\sum}
\newcommand{\Prod}{\displaystyle\prod}
\begin{document}
\thispagestyle{empty}

\noindent {\large {\bf The order of the decay of the hole probability for Gaussian random SU($m+1$)\\ polynomials.\hfill\\
by Scott Zrebiec }}\medskip

\bigskip\bigskip
{\bf Abstract} \par We show that for Gaussian random SU($m+1$)
polynomials of a large degree $N$ the probability that there are
no zeros in the disk of radius $r$ is less than $e^{-c_{1,r}
N^{m+1}}$, and is also greater than $e^{-c_{2,r} N^{m+1}}$.
Enroute to this result, we also derive a more general result:
probability estimates for the event where the volume of the zero
set of a random polynomial of high degree deviates significantly
from its mean.
\newsection{Introduction and notation}\par
A hole refers to the event where a particular Gaussian random
holomorphic function has no zeros in a given domain where many are
expected. The order of the decay of the hole probability has been
computed in several cases including for ``flat" complex Gaussian
random holomorphic functions on $\C^1$, \cite{SodinTsirelson05},
using a method which shall be used here. This work was
subsequently refined to cover other large deviations in the
distribution of the zeros sets, \cite{Krishnapur}, and generalized
to $\C^m$, \cite{Zrebiec06}. Other results compute the hole
probability for a class of complex Gaussian random holomorphic
functions on the unit disk, \cite{PeresVirag04}, and provide a
weak general estimate for any one variable complex Gaussian random
holomorphic functions, \cite{Sodin00}. Additionally significant
hole probability results have been discovered for real Gaussian
random polynomials, (\cite{DemboPoonenShaoZeitouni02},
\cite{LiShao02}).
\par
Various properties of the zeros of random SU($m+1 $) polynomials
have been studied, in particular the zero point correlation
functions have been computed. This is of particular interest in
the physics literature as the zeros describe a random spin state
for the Majorana representation (modulo phase), \cite{Hannay96}.
Further this choice is intuitively pleasing as the zeros are
uniformly distributed on $\C P^m$ (according to the Fubini-Study
metric), or alternatively the average distribution of zeros is
invariant under the SU($m+1$) action on $\C P^m$. These random
SU($m+1$) polynomials can be written as:
$$\psi_{\alpha,N}(z) = \sum_{|j|=0}^N \alpha_j \sqrt{ N
\choose{j}} z^j=\sum_{\sum j_k \leq N} \alpha_j \sqrt{ N
\choose{j_1, \ldots, j_m}} z_1^{j_1} z_2^{j_2}\ldots z_m^{j_m} ,
\eqno{(1)}$$ using standard multi index notation, and where
$\forall j, \ \alpha_j,$ are independent identically distributed
standard complex Gaussian random variables (mean 0 and
variance 1). 
\par For these Gaussian random SU($m+1$) polynomials we will be
computing the hole probability in a manner based on that used by
Sodin and Tsirelson to solve the similar problem for flat random
holomorphic functions on $\C^1$, \cite{SodinTsirelson05}. In
particular, we shall be estimating the unintegrated counting
function for a random SU($m+1$) variable polynomial, which is
defined as
$$n_{\psi_{\alpha,N}(r)}= \frac{1}{r^{2m-2}}\cdot\text{Volume of the zero set of
}\psi_{\alpha,N}\bigcap B(0,r), $$ where $B(0,r)=\{z\in \C^m:
|z|<r\}$. \par Our first main result will be the following:
\begin{theorem}
\label{MainCPN} Let $\psi_{\alpha,N}$ be a degree $N$ Gaussian
random SU($m+1$) polynomial,
$$\psi_{\alpha,N} (z)= \Sum_j \alpha_j
\sqrt{N \choose j} z^j,$$ where $\alpha_j$ are independent
identically distributed complex Gaussian random variables, and let
$n_{\psi_\alpha,N}(r) $ be the unintegrated counting function.\par
For all $\Delta
> 0,$ and $r>0$ there exists $A_{\Delta,r,m}$ and $N_{\Delta,r,m}$ such
that for all $N>N_{\Delta,r,m}$
$$\left|n_{\psi_\alpha,N}(r) - \frac{Nr^{2}}{1+r^2}
\right| \leq \Delta N$$ except for an event whose probability is
less than $e^{-A_{\Delta,r,m} N^{m+1}}. $

\end{theorem}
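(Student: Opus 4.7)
The plan is to adapt the Sodin--Tsirelson method to this several-variable setting. The first step is to translate a deviation estimate for $n_{\psi_{\alpha,N}}(r)$ into one for the spherical logarithmic average of $|\psi_{\alpha,N}|$ via a multi-dimensional Jensen (Poincar\'e--Lelong) formula. Writing $\tilde n_\psi(r) := \int_0^r n_\psi(t)/t\, dt$ and $M_\psi(r)$ for the mean value of $\log|\psi|$ on $\partial B(0,r)$, one has $\tilde n_\psi(r) = c_m\bigl(M_\psi(r) - \log|\psi(0)|\bigr)$ for a dimensional constant $c_m$. Since $n$ is monotone, controlling $\tilde n$ at the two radii $r$ and $r(1+\epsilon)$ pins down $n(r)$ up to an error of order $\epsilon N$, so it suffices to prove two-sided concentration of $M_{\psi_{\alpha,N}}(r)/N$ around $\tfrac12\log(1+r^2)$ (the Fubini--Study potential) with probability of failure at most $e^{-cN^{m+1}}$, plus a routine estimate on $\log|\alpha_0|/N$.

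For the upper tail of $M_{\psi_{\alpha,N}}(r)$, concavity of the logarithm gives
\[
M_{\psi_{\alpha,N}}(r) \leq \tfrac12 \log(S_r),\qquad S_r := \frac{1}{|\partial B(0,r)|}\int_{\partial B(0,r)}|\psi_{\alpha,N}|^2\, d\sigma,
\]
and $S_r$ is an explicit nonnegative combination of the $\sim N^m$ independent variables $|\alpha_j|^2$ with mean $(1+r^2)^N$. A standard exponential-moment calculation for weighted $\chi^2$ sums shows that $P(S_r > e^{\delta N}(1+r^2)^N)$ decays essentially like $\exp(-cN^m e^{\delta N})$, vastly more than the needed $e^{-cN^{m+1}}$.

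The main obstacle is the lower tail of $M_{\psi_{\alpha,N}}(r)$: the small-ball estimate
\[
P\bigl(M_{\psi_{\alpha,N}}(r) < \tfrac{N}{2}\log(1+r^2) - \delta N\bigr) \leq e^{-cN^{m+1}},
\]
which is what forces the exponent $N^{m+1}$. I would choose a lattice of $\sim N^m$ points $\{z_k\}\subset\partial B(0,r)$ spread at the natural scale $1/\sqrt{N}$, and orthogonalize the normalized Gaussian values $\psi_{\alpha,N}(z_k)/(1+|z_k|^2)^{N/2}$ via Gram--Schmidt against the SU($m+1$)-invariant reproducing kernel $K_N(z,w)=(1+\langle z,w\rangle)^N/((1+|z|^2)(1+|w|^2))^{N/2}$, whose off-diagonal decay at that scale makes the orthogonalization a controlled perturbation of the identity. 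Applying the complex-Gaussian anti-concentration $P(|\alpha|<e^{-\delta N})\leq e^{-2\delta N}$ to each of the $\sim N^m$ nearly-independent coordinates yields the joint small-ball probability $e^{-cN^{m+1}}$, and a Bernstein--Walsh-type inequality then transfers those pointwise lower bounds on $|\psi_{\alpha,N}|$ to a lower bound on the logarithmic spherical mean.

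Finally, combining the two $M$-estimates at $r$ and $r(1+\epsilon)$ through the Jensen identity produces the required deviation for $\tilde n$, and by monotonicity for $n$ itself, with all losses absorbed into $A_{\Delta,r,m}$ upon choosing $\epsilon=\epsilon(\Delta,r)$ small. The upper-tail and Jensen steps are routine; the lower-tail small-ball estimate, driven by the reproducing-kernel geometry of the SU($m+1$) ensemble, is the genuine difficulty.
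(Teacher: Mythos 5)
Your skeleton matches the paper's: a two--radius Jensen/Poincar\'e--Lelong identity (the paper's Lemma 5.2) plus monotonicity of $n$ reduces everything to two-sided concentration of the spherical average $M_{\psi}(r)=\int_{S_r}\log|\psi_{\alpha,N}|\,d\mu_r$ around $\frac{N}{2}\log(1+r^2)$, and you correctly identify the lower tail as the crux. Your upper tail, via concavity of $\log$ and a weighted-$\chi^2$ bound on $\int_{S_r}|\psi|^2$, is a legitimate (indeed cleaner) alternative to the paper's route of bounding all $|\alpha_j|$ by $N^m$ and applying Cauchy--Schwarz.

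The lower-tail argument, however, has a genuine gap in two places. First, the event $\{M_{\psi}(r)<\frac{N}{2}\log(1+r^2)-\delta N\}$ is not contained in the event that all $\sim N^m$ lattice values are small: the mean of $\log|\psi|$ can be dragged down by the negative singularity of $\log|\psi|$ along the codimension-two zero divisor, on sets far below the lattice scale $1/\sqrt N$, while most lattice values stay large; and if you instead union-bound over single-point failures of probability $e^{-2\delta N}$ you only get $N^m e^{-2\delta N}=e^{-cN}$, far too weak. Second, the reverse transfer -- from lower bounds at lattice points to a lower bound on $\int_{S_r}\log|\psi|\,d\mu_r$ -- cannot be done by a Bernstein--Walsh or gradient argument: over a distance $1/\sqrt N$ the function can change by $\sim N^{1/2}(1+r^2)^{N/2}$, which swamps the claimed lower bound $e^{-\delta N}(1+r^2)^{N/2}$, and $\log|\psi|=-\infty$ on the zero set in any case, so no pointwise bound between lattice points is available. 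The paper closes both gaps by a different mechanism: the exponent $N^{m+1}$ comes from the Cauchy estimates -- if $\max|\psi|$ is small on a fixed polydisk then all $\sim(\delta N)^m$ coefficients $\alpha_j$ with $j/N$ in a cube of side $\sim\delta$ must each be smaller than $C N^{m/4}(1-\delta)^{N/4}$, an intersection of independent events of probability $e^{-c_{\delta,r}N^{m+1}}$ (Lemma 3.2, transported to balls about arbitrary points by SU($m+1$) invariance in Lemma 3.3). Only a constant number $\sim\delta^{-(2m-1)}$ of such reference points near $S_r$ is then needed for the Riemann-sum approximation, and the negative part of $\log|\psi|$ is controlled by the harmonic-majorant inequality $\int P_r(\zeta_0,\cdot)\log^-|\psi|\,d\mu_r\le\int P_r(\zeta_0,\cdot)\log^+|\psi|\,d\mu_r$ (Lemma 4.1), not by pointwise estimates. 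Your lattice-point heuristic is the right intuition for why $N^{m+1}$ is the correct order (it is essentially how the hole-probability lower bound works), but as a proof of the upper tail of the deviation it would require both a quantitative pigeonhole distributing the deficit $\delta N$ over lattice depths and uniform invertibility of the $N^m\times N^m$ Gram matrix, neither of which is routine; the coefficient/Cauchy-estimate route is the one that actually closes.
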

Theorem \ref{MainCPN} gives an upper bound on the rate of decay of
the hole probability, and we will be able to prove a lower bound
for the decay rate of the same order:
\begin{theorem}
\label{Hole probability CPN} Let $\psi_{\alpha,N}$ be as in
theorem \ref{MainCPN}, and let $$Hole_{N,r}=\{\alpha:\forall z \in
B(0,r), \ \psi_{\alpha,N} (z)\neq 0 \},$$ then there exists $
c_{1,r,m},$ $c_{2,r,m}>0$ and $N_r$ such that for all $N>N_{r,m}$
$$ e^{-c_{2,r} N^{m+1}} \leq Prob (Hole_{N,r})\leq e^{-c_{1,r} N^{m+1}}$$
\end{theorem}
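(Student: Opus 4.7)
The plan splits naturally into two halves. The upper bound will be read directly off Theorem~\ref{MainCPN}, while the lower bound will come from an explicit ``constant polynomial'' construction in the spirit of Sodin--Tsirelson.

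For the upper bound, I observe that a zero-free configuration in $B(0,r)$ forces $n_{\psi_{\alpha,N}}(r)=0$, so $Hole_{N,r}$ is contained in the large-deviation event
$$\left|n_{\psi_{\alpha,N}}(r) - \frac{Nr^2}{1+r^2}\right| \geq \frac{Nr^2}{1+r^2}.$$
Applying Theorem~\ref{MainCPN} with any fixed $\Delta < r^2/(1+r^2)$ will then supply the claimed exponent $c_{1,r}$.

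For the lower bound I would introduce the event
$$E_N := \{|\alpha_0| \geq 1\} \cap \bigcap_{0 < |j| \leq N} \{|\alpha_j| \leq e^{-KN}\},$$
where $K=K(r,m)$ is to be chosen momentarily. Cauchy--Schwarz applied to the multinomial identity $\sum_{|j|\leq N}\binom{N}{j_1,\ldots,j_m,N-|j|}\,r^{2|j|}=(1+mr^2)^N$ yields
$$\sum_{0<|j|\leq N}\sqrt{\binom{N}{j}}\,r^{|j|} \leq \sqrt{\binom{N+m}{m}}\,(1+mr^2)^{N/2} \leq C_{r,m}^{\,N}$$
for some constant $C_{r,m}>1$. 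Choosing $K$ large enough that $e^{-K}C_{r,m}<1/2$, the triangle inequality forces $|\psi_{\alpha,N}(z)|\geq 1/2$ for every $z\in B(0,r)$ on $E_N$, so $E_N\subset Hole_{N,r}$.

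To finish, I will estimate $Prob(E_N)$ from below. The number of nonzero multi-indices with $|j|\leq N$ is $\binom{N+m}{m}-1\sim N^m/m!$, and for a standard complex Gaussian, $Prob(|\alpha_j|\leq e^{-KN})=1-\exp(-e^{-2KN})\geq\tfrac{1}{2}e^{-2KN}$ once $N$ is large. Independence of the coefficients then gives
$$Prob(E_N) \geq Prob(|\alpha_0|\geq 1)\cdot\prod_{0<|j|\leq N}\tfrac{1}{2}e^{-2KN} \geq e^{-c_{2,r}N^{m+1}}.$$
I do not anticipate a serious obstacle; the only bookkeeping point is that $K$ is selected depending on $r$ and $m$ alone, so that the product of $\sim N^m/m!$ independent small-ball probabilities, each of log-size $-2KN$, produces an exponent exactly of order $N^{m+1}$, matching the upper bound.
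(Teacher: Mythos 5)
Your proposal is correct and follows essentially the same route as the paper: the upper bound is read off Theorem \ref{MainCPN} exactly as in the text (a hole forces $n_{\psi_{\alpha,N}}(r)=0$, a deviation of size $\frac{Nr^2}{1+r^2}$), and the lower bound uses the same ``dominant constant term'' event, the paper taking the index-dependent thresholds $|\alpha_j|<{N \choose j}^{-1/2}N^{-m}r^{-|j|}$ where you take a uniform $e^{-KN}$. Your uniform threshold sidesteps the paper's Stirling computation but delivers the same $e^{-cN}$ per-coefficient small-ball probability over $\sim N^m/m!$ indices, hence the same $N^{m+1}$ exponent.
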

As an immediate consequence of this result, the order of the
probability specified in the Theorem \ref{MainCPN} is the correct
order of decay.
\par
Random polynomials of the form studied here are the simplest
examples of a class of natural random holomorphic sections of
large $N$ powers of a positive line bundle on a compact K\"{a}hler
manifold. Most of the results stated in this paper may be restated
in terms of Szeg\"{o} kernels, which exhibit universal behavior in
the large $N$ limit in an appropriately scaled neighborhood.
Hopefully, this paper will provide insight into proving a similar
decay rate for this more general setting. This has already been
done for other properties of random holomorphic sections, e.g.
correlation functions, \cite{BleherShiffmanZelditchUniv}.\par

\par {\bf Acknowledgement:} I would like
to thank Bernard Shiffman and Misha Sodin for many useful
discussions.

\newsection{SU($m+1$) Invariance}\par

We begin by letting $Poly_N$ denote the set of polynomials in $m$
variables whose degree is less than or equal to $N$. $Poly_N$
becomes a
Hilbert space with respect to the following SU($m+1$) invariant norm, \cite{BleherShiffmanZelditchUniv}:\\

$$\displaystyle{\| f\|_{N}
^2}\displaystyle{:=}\displaystyle{\frac{N+m!}{N!
\pi^m}\int_{z\in\C^m} |f(z)|^2 \frac{dm(z)}{(1+|z|^2)^{N+m+1}}},$$
where $dm$ is just the usual Lebesque measure on $\C^m$. For this
norm $ \left\{ \sqrt{N \choose j} z^j\right\}$ is an orthonormal
basis, as is $ \left\{ \sqrt{N \choose j} \Prod_{k=0}^{m}
(\Sum_{l=1}^m a_{k,l} z_l+ a_{k,0})^{j_k} \right\}$, where
$A=(a_{k,l})$ and $A \cdot \overline{A^T}= I$, and $j_0 = N- |j|$.
Specifically, one alternate orthonormal basis is, for any
$\zeta\in \C^1 $, $$\left\{ \sqrt{N\choose j} \left(\frac{z_1 -
\zeta}{\sqrt{1+|\zeta|^2}}\right)^{j_1}
\left(\frac{1+\overline{\zeta}
z_1}{\sqrt{1+|\zeta|^2}}\right)^{N-|j|} \prod_{k=2}^m z_k^{j_k}
\right\}_{|j|\leq N}$$\par

Clearly, by line (1), a Gaussian random SU($m+1$) polynomial is
defined as, $\psi_{\alpha,N}(z)= \Sum_{|j|=0}^{|j|=N} \alpha_j
\psi_j(z)$, where $\alpha_j$ are i.i.d. standard complex Gaussian
random variables, and $\{\psi_j\}$ is the first orthonormal basis.
Any basis for $Poly_N$ could have been used and the Gaussian
random SU($m+1$) polynomials would be probabilistically identical,
as for $\{\alpha_j\}$ a sequence of i.i.d. Gaussian random
variables there exists another sequence of i.i.d. Gaussian random
variables, $\{\alpha_j'\}$, such that
$$\sum \alpha_j \sqrt{N \choose j} z^j =  \Sum_{|j|=0}^N \alpha'_j \sqrt{N \choose j} \left(\frac{z_1-\zeta}{\sqrt{1+|\zeta|^2}}\right)^{j_1}\left(\frac{1+ \overline{\zeta} z_1}{\sqrt{1+|\zeta|^2}}\right)^{N-|j|} \Prod_{k=2}^N z_{k}^{j_k} . \eqno{(2)} $$

\newsection{Large deviations of the maximum of a random SU($m+1$) polynomial}\par

In order to estimate $\displaystyle{\max_{B(0,r)}
\log|\psi_{\alpha,N}|}$, we will use following elementary
estimates to compute upper and lower bounds for the probability of
several events:

\begin{proposition}\label{Gauss}
Let $\alpha$ be a standard complex Gaussian random variable, \\
\begin{tabular}{cl}
then & i) $Prob(\{| \alpha| \geq \lambda \}) =
e^{-\lambda^2}$\\
& ii) $Prob(\{ | \alpha| \leq \lambda \}) =1- e^{-\lambda^2}\in
[\frac{\lambda^2}{2}, \lambda^2], if \lambda \leq 1$\\
& iii) if $\lambda\geq 1$ then $Prob(\{ | \alpha| \leq \lambda \})
\geq \haf$\\
\end{tabular}

\end{proposition}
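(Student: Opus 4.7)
The plan is to reduce everything to the explicit density of a standard complex Gaussian. Since $\alpha$ has density $\frac{1}{\pi} e^{-|z|^2}$ with respect to Lebesgue measure on $\C$, passing to polar coordinates gives
$$Prob(|\alpha| \geq \lambda) = \int_\lambda^\infty 2r e^{-r^2}\, dr = e^{-\lambda^2},$$
which is statement (i). Statement (ii), the identity $Prob(|\alpha|\leq \lambda) = 1 - e^{-\lambda^2}$, is then immediate by taking complements.

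For the two-sided estimate in (ii), the task is to sandwich $1 - e^{-\lambda^2}$ between $\lambda^2/2$ and $\lambda^2$ whenever $\lambda \leq 1$. I would deduce the upper bound from the elementary convexity inequality $e^{-x} \geq 1 - x$ valid for all $x \geq 0$, applied at $x=\lambda^2$. For the lower bound, I would use one more term of the Taylor expansion: integrating $e^{-t} \leq 1 - t + t^2/2$ (or equivalently $1 - e^{-x} \geq x - x^2/2$) and evaluating at $x = \lambda^2$ yields $1 - e^{-\lambda^2} \geq \lambda^2 - \lambda^4/2 = \lambda^2(1 - \lambda^2/2)$, and since $\lambda^2 \leq 1$ the factor in parentheses is at least $1/2$.

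For (iii), when $\lambda \geq 1$ we have $1 - e^{-\lambda^2} \geq 1 - e^{-1} > 1/2$, which finishes the proof. No step here is really an obstacle — the whole proposition is a routine computation with the Gaussian density and elementary inequalities — so the only real care needed is choosing the correct Taylor remainder to produce the constant $1/2$ in the lower bound of (ii).
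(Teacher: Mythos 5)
Your proof is correct, and it matches what the paper intends: the paper states this proposition without proof as an ``elementary estimate,'' and your computation (polar coordinates for the density $\frac{1}{\pi}e^{-|z|^2}$, then $1-x\leq e^{-x}\leq 1-x+\frac{x^2}{2}$ for the two-sided bound in (ii)) is exactly the standard verification being taken for granted. Nothing to add.
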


This next lemma is key as it states that the maximum of the norm
of a random SU($m+1$) polynomial on the ball of radius $r$ tends
to not be too far from its expected value.
\begin{lemma}\label{GrowthRateCP1}\label{GrowthRateCP1NR0}
For all $\delta \in (0,1)$, and for all $r>0$ there exists
$a_{r,\delta,m}>0$ and $N_{\delta,m}$ such that for all $N>N_{\delta,m}$\\


 $$ \max_{B(0,r)} |\psi_{\alpha,N}(z)|
\in \left[(1+r^2)^{\frac{N}{2}}(1-\delta)^{\frac{N}{2}} ,
(1+r^2)^{\frac{N}{2}} (1+\delta)^{\frac{N}{2}}\right],$$ except
for an event whose probability is less than $e^{- a_{r,\delta,m}
N^{m+1}}$.

\end{lemma}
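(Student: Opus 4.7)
The argument splits into an upper bound and a lower bound on $\max_{B(0,r)}|\psi_{\alpha,N}|$, each reduced to an estimate on the independent coefficients $\alpha_j$. For the upper bound, Cauchy--Schwarz applied to the orthonormal basis expansion, together with the multinomial identity, gives
\[
|\psi_{\alpha,N}(z)|^{2}
\;\leq\;
\Bigl(\Sum_{|j|\leq N}|\alpha_j|^{2}\Bigr)
\Sum_{|j|\leq N}{N\choose j}|z_1|^{2j_1}\cdots|z_m|^{2j_m}
\;=\;
\Bigl(\Sum_{|j|\leq N}|\alpha_j|^{2}\Bigr)(1+|z|^{2})^{N}.
\]
Hence $\{\max_{B(0,r)}|\psi_{\alpha,N}|>(1+r^2)^{N/2}(1+\delta)^{N/2}\}$ is contained in $\{\Sum|\alpha_j|^2>(1+\delta)^N\}$. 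Since $\Sum|\alpha_j|^2$ is a sum of $D_N:=\binom{N+m}{m}$ i.i.d. $\mathrm{Exp}(1)$ variables, the Chernoff estimate $P(\Sum|\alpha_j|^2>t)\leq 2^{D_N}e^{-t/2}$ evaluated at $t=(1+\delta)^N$ is doubly exponentially small in $N$, far smaller than the target $e^{-a_{r,\delta,m}N^{m+1}}$.

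For the lower bound, the polydisc $\{(z_1,\ldots,z_m):|z_k|\leq r/\sqrt m\}$ lies inside $B(0,r)$, so Cauchy's integral formula on it gives
\[
|\alpha_j|\Bigl(\frac{r}{\sqrt m}\Bigr)^{|j|}\sqrt{N\choose j}
\;\leq\;
\max_{B(0,r)}|\psi_{\alpha,N}|
\qquad\text{for every multi-index } j.
\]
Writing $L=(1-\delta)^{N/2}(1+r^2)^{N/2}$ and $\mu_j=L^2\bigl/\bigl[{N\choose j}(r^2/m)^{|j|}\bigr]$, the event $\{\max_{B(0,r)}|\psi_{\alpha,N}|\leq L\}$ forces $|\alpha_j|^2\leq\mu_j$ for every $j$, and by independence of the $\alpha_j$ and Proposition~\ref{Gauss}(ii),
\[
P\Bigl(\max_{B(0,r)}|\psi_{\alpha,N}|\leq L\Bigr)
\;\leq\;
\Prod_{j:\,\mu_j<1}\mu_j
\;=\;
\exp\Bigl(-\Sum_{j:\,\mu_j<1}\log(1/\mu_j)\Bigr).
\]

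\emph{The hard part} is showing $\Sum_{j:\,\mu_j<1}\log(1/\mu_j)\geq c_{r,\delta,m}N^{m+1}$. The identity $\Sum_j{N\choose j}(r^2/m)^{|j|}=(1+r^2)^N$ exhibits $T(j):={N\choose j}(r^2/m)^{|j|}$ as $(1+r^2)^N$ times the PMF of $\mathrm{Multinomial}(N;p)$ with $p_0=1/(1+r^2)$ and $p_i=(r^2/m)/(1+r^2)$ for $i\geq 1$. Stirling then yields $\log T(j)=N\log(1+r^2)-N\,\mathrm{KL}(j/N\|p)+O(\log N)$, so $\log(1/\mu_j)=N|\log(1-\delta)|-N\,\mathrm{KL}(j/N\|p)+O(\log N)$ and $\{\mu_j<1\}$ coincides, up to an $O(\log N/N)$ fattening, with the KL-ball $\{\mathrm{KL}(j/N\|p)<|\log(1-\delta)|\}$. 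This ball is a region of fixed positive volume in the probability simplex and so contains $\gtrsim N^m$ lattice points; a Riemann-sum estimate converts the sum into $N^{m+1}$ times the positive constant $\int_{\mathrm{KL}(q\|p)<|\log(1-\delta)|}\bigl(|\log(1-\delta)|-\mathrm{KL}(q\|p)\bigr)\,dq$. The care required is to make the Stirling expansion uniform up to the boundary of the KL-ball and to absorb the accumulated $O(N^m\log N)$ error, which is of lower order than the leading $N^{m+1}$.
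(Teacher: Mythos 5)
Your proposal is correct, and its skeleton is the same as the paper's: an upper bound via Cauchy--Schwarz against the orthonormal expansion, and a lower bound via Cauchy coefficient estimates on a polydisc inscribed in $B(0,r)$, which force roughly $N^m$ independent coefficients to each be exponentially small in $N$, so that independence multiplies the probabilities down to $e^{-cN^{m+1}}$. The two arguments differ in execution. For the upper bound the paper simply conditions on the event $\{\forall j,\ |\alpha_j|\le N^m\}$ (a union bound, failure probability at most $e^{-\haf N^{2m}}$ for large $N$) and applies the triangle inequality plus Cauchy--Schwarz; your Chernoff bound on $\Sum_j |\alpha_j|^2$ gives a doubly exponential tail, stronger than needed but equally valid. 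For the lower bound the paper never describes the full set $\{j:\mu_j<1\}$: it writes $j=xN$, observes that ${N\choose j}(r^2/m)^{|j|}$ peaks near $x_k=\frac{r^2}{1+mr^2}$, and checks by an explicit elementary computation (a reparametrization $x_k=(1-\Delta_k)\frac{r^2}{1+mr^2}$) that on the cube $x_k\in\left[\frac{r^2}{1+mr^2}-s_{r,m}\delta,\ \frac{r^2}{1+mr^2}\right]$ each $|\alpha_j|$ is forced below $CN^{m/4}(1-\delta)^{N/4}$; that cube already supplies the $(\lfloor Ns_{r,m}\delta\rfloor)^m$ lattice points needed. Your KL--divergence picture identifies the whole region $\{\mathrm{KL}(j/N\,\|\,p)<|\log(1-\delta)|\}$ and is conceptually cleaner (it would also give the sharp constant in the exponent), but it obliges you to control Stirling errors; note that for a lower bound you do not need uniformity up to the boundary of the KL ball --- it suffices to restrict to a compact subregion of its interior, bounded away from the faces of the simplex, on which Stirling is uniform and each summand is at least $\epsilon N$. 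That restricted subregion is precisely the role the paper's explicit cube plays. Both routes yield the stated bound $e^{-a_{r,\delta,m}N^{m+1}}$.
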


\begin{proof}
We will first prove a sharper decay estimate for the probability
of the event where a random SU($m+1$) polynomial takes on large
values in the ball of radius $r$:
$$Prob\left( \{ \max_{B(0,r)} |\psi_{\alpha,N}(z)|
>(1+r^2)^{\frac{N}{2}} (1+\delta)^{\frac{N}{2}} \}\right)<
e^{-c_m N^{2m}}.$$ To do this we consider the event $\Omega_N:= \{
\forall j, \ |\alpha_j| \leq N^m \},$ the complement of which has
probability $\leq (N+1)^m e^{-N^{2m}}$, by Proposition
\ref{Gauss}. For $\alpha \in \Omega_N$,\par
\begin{tabular}{rrl} $\displaystyle{\max_{z\in B(0,r)}|\psi_{\alpha,N}(z )|}$ &$\displaystyle{=}$&$\displaystyle{\max_{z\in B(0,r)} \left|\sum \alpha_j {N \choose j}^\haf (z)^j\right|}$ \\
&$\displaystyle{\leq}$ &$\displaystyle{\max_{z\in B(0,r)} \sum |\alpha_j| {N \choose j}^\haf |z|^j}$ \\
&$\displaystyle{\leq}$&$\displaystyle{ \max_{z\in B(0,r)} N^m
(N+1)^{\frac{m}{2}} (1+\sum |z_i|^2)^{\frac{N}{2}}}$,\\& & by the
Schwartz inequality.\\& $\displaystyle{ =}$& $\displaystyle{N^m
(N+1)^{\frac{m}{2}}(1+r^2)^{\frac{N}{2}}}$\\
& $\displaystyle{ \leq}$& $\displaystyle{
(1+\delta)^{\haf N}(1+r^2)^{\frac{N}{2}}}$, if $N>N'_{\delta,m,1 }$\\
\end{tabular}\par

In other words, if $N>N'_{\delta,m,1}$ then $$ \left\{
\max_{B(0,r)} |\psi_{\alpha,N}(z)|
>(1+r^2)^{\frac{N}{2}} (1+\delta)^{\frac{N}{2}} \right\} \subset \Omega_N^c$$ and thus, for all $N>N_{\delta,m}$, this first event has probability less than or equal to
 $(N+1)^m e^{-N^{2m}}< e^{-\haf N^{2m}}$. This decay rate is independent of $\delta$ and $r$, and the estimate for the order of the decay of this probability could be improved upon.
\par We complete the proof by showing that:

$$Prob\left( \{ \max_{B(0,r)} |\psi_{\alpha,N}(z)|
< (1+r^2)^{\frac{N}{2}}(1-\delta)^{\frac{N}{2}} \}\right)< e^{-
a_{r,\delta}N^{m+1} }.$$

This will be done when we prove the following claim concerning a
polydisk, $P(0,\frac{1}{\sqrt m}r ):= \{z\in\C^m: |z_1|<
\frac{1}{\sqrt m}r,\ |z_2|< \frac{1}{\sqrt m}r, \ldots,\ z_m <
\frac{1}{\sqrt m}r \}$:
$$Prob\left( \{ \max_{P(0,r)} |\psi_{\alpha,N}(z)| < (1+m
r^2)^{\frac{N}{2}}(1-\delta)^{\frac{N}{2}} \}\right)< e^{-
a_{r,\delta}N^{m+1}}.$$ This second claim is stronger as
$\displaystyle{\max_{P(0,\frac{1}{\sqrt{m}}r)}
|\psi_{\alpha,N}(z)|\leq \max_{B(0,r)} |\psi_{\alpha,N}(z)|}$.\par

Consider the event where $$\displaystyle{M=\max_{P(0,r)}
|\psi_{\alpha,N}(z)| < (1+mr^2)^{\frac{N}{2}}
(1-\delta)^{\frac{N}{2}}}.$$ We will show that this event can only
occur if certain Gaussian random variables, $\alpha_j$, obey the
inequality $|\alpha_j|<e^{-c_m N}$, where $c_m>0$. Further we will
show that this occurs whenever $j$ is in a certain cube which will
have sides of length $c_{\delta,r}N$. This will give us the
desired decay rate for the probability.

The Cauchy estimates for a holomorphic function state that:
$$\displaystyle{ |\psi_{\alpha,N}^{(j)}(0)|\leq j!
\frac{M}{r^{|j|}}}.$$ By differentiating equation (1) we compute
that $$ \psi_{\alpha,N}^{(j)}(0)= \sqrt{ N \choose{j}} j!
\alpha_j.$$ Combining this with Stirling's formula:
$$\sqrt{2\pi j_1}j_1^{j_1} e^{-j_1} <j_1! < \sqrt{2\pi
j_1}j_1^{j_1} e^{-j_1} e^{\frac{1}{12}}$$ we get that:
\par
\begin{tabular}{rll}$\displaystyle{ |\alpha_j |}$&$ \displaystyle{
\leq}$&$\displaystyle{ \frac{(1+mr^2)^{\frac{N}{2}} (1-\delta
)^\frac{N}{2}}{r^{|j|} \sqrt{ N \choose{j}}}}$ \\ &
$\displaystyle{ \leq}$ & $\displaystyle{ e^{\frac{m}{12} +
\frac{m}{2}\log(2\pi)} \left(\frac{(1+m r^2)^{\frac{N}{2}}
(1-\delta)^{\frac{N}{2}} (N-|j|)^{ \haf (N-|j| +\haf)}  \prod
(j_k)^{\frac{j_k+\haf}{2}}}
{r^{|j|} N^{\frac{N + \haf}{2}}}\right)}$\\
&$\displaystyle{ \leq}$&$\displaystyle{ e^{\frac{m}{12} +
\frac{m}{2}\log(2\pi)} N^{\frac{m}{4}} \cdot \left(\frac{(1+ m
r^2)^{\frac{N}{2}} (1-\delta)^{\frac{N}{2}}(N-|j|)^{ \haf (N-|j|)}
\prod (j_k)^{\frac{j_k}{2}} }
{r^{|j|} N^{\frac{N}{2}}}\right)}$\\
\end{tabular}\\ For the time being we focus on the term in parenthesis in the previous line which we call $A$.
Writing $j$ as $j= (j_k)= (x_k N), \ x_k\in (0,1)$, we now
have:\par

$\displaystyle{ A\ = \ (1-\delta)^\frac{N}{2}\left( \frac{(1+ m
r^2)}{r^{2|x| }} (1-|x|)^{(1-|x|)} \Prod_{k=1}^m\left(x_k
\right)^{x_k}\right)^\frac{N}{2}} $\\ \par If for all $k\in \{ 1,
2, \ldots, m\}$, $x_k=\frac{r^2}{1+ m r^2}$ then $A=
(1-\delta)^\frac{N}{2}$, which
inspires the following claim:\\
Claim: Let $s_{r,m}= \frac{1}{2m(1+m r^2)} \min\left\{
r^2, 1 \right\}$.\\
If for each $k\in \{ 1, 2, \ldots, m\}, \
x_k\in\left[\frac{r^2}{1+m r^2}-s_{r,m} \delta, \frac{r^2}{1+m
r^2}\right] \subset (0,1),$ and thus $|x|<1 $ then

$$(1+ m r^2)\left(\frac{x}{r^{2}}\right)^x (1-|x|)^{(1-|x|)}<
(1-\delta)^{-\frac{1}{2}}.$$\par

Proof: We begin by setting $x_k = (1-\Delta_k) \frac{r^2}{1+m
r^2}$ and $\Delta=\sum \Delta_k $. Therefore
$$\Delta_k \in \left[ 0, \frac{1}{2m}\min\left\{1, \frac{1}{r^2}
\right\}\delta \right]\ \text{and} \ \Delta \in \left[
0,\frac{1}{2}\min\left\{1, \frac{1}{r^2} \right\}\delta \right]
.$$ Thus, $1-|x| = \frac{1+\Delta r^2}{ 1+ m r^2}$, and from this
we compute that:
\\
\begin{tabular}{rcl} $\displaystyle{\frac{(1+m r^2)}{r^{2|x|}}
(1-|x|)^{(1-x)} (x_k)^{x_k}}$&
$\displaystyle{=}$& $\displaystyle{ \frac{1+m r^2}{r^{2|x|}} \left( \frac{1+\Delta r^2}{1+ m r^2}\right)^{1-|x|}}$\\
& & $\displaystyle{ \ \cdot  \left(\frac{r^2}{1+m r^2}\right)^{|x|} \prod(1-\Delta_k)^{x_k} } $\\
& $\displaystyle{=}$&$\displaystyle{ \left(1+\Delta r^2 \right)^{1-|x|} \prod (1-\Delta_k )^{x_k} }$\\
& $\displaystyle{\leq}$&$\displaystyle{\left(1+ \Delta r^2 \right)}$\\
& $\displaystyle{\leq}$& $\displaystyle{ 1+\haf \delta}$\\ &
$\displaystyle{ \leq} $&$\displaystyle{
\left(\frac{1}{1-\delta}\right)^\haf}$.
\end{tabular}\\ Proving the claim. \par

Therefore if for each $j, \displaystyle{\ x_j\in
\left[\frac{r^2}{1+m r^2}-s_{r,m} \delta, \frac{r^2}{1+m
r^2}\right]}$, then $A_k < (1-\delta)^{\frac{N}{4}}$. This in turn
guarantees that $|\alpha_j|< e^{\frac{m}{12} +
\frac{m}{2}\log(2\pi)} N^{\frac{m}{4}} (1-\delta)^{\frac{N}{4}}$.
The probability this occurs for a single $\alpha_j$ is less than
or equal to $$\left( e^{\frac{m}{12} + \frac{m}{2}\log(2\pi)}
N^{\frac{m}{2}} (1-\delta)^{\frac{N}{4}}\right)^2.$$ Thus the
probability this occurs for all $\alpha_j$, $j_k\in
\left[(\frac{r^2}{1+m r^2}-s_{r,m} \delta) N, (\frac{r^2}{1+m r^2}
)N \right]$, is less than or equal to
$$\left(e^{\frac{m}{12} +
\frac{m}{2}\log(2\pi)} N^{\frac{m}{4}}
(1-\delta)^{\frac{N}{4}}\right)^{2 (\lfloor N s_{r,m} \delta
\rfloor)^m }.$$

Hence, there exists $a_{r,\delta,m}>0$ and $N''_{\delta,m} $ such
that for all $N>N''_{\delta,m}$,

$$\left(e^{\frac{m}{12} +
\frac{m}{2}\log(2\pi)} N^{\frac{m}{4}}
(1-\delta)^{\frac{N}{4}}\right)^{2 (\lfloor N  s_{r,m} \delta
\rfloor)^m } <e^{-a_{r,\delta,m}N^{m+1}}$$ The result follows
after
setting $N_{\delta,m} = \max\{N'_{\delta,m}, N''_{\delta,m}\}$\\
\end{proof}

A nice application of this lemma, along with line (2) is the
following:

\begin{lemma}\label{GrowthRateCP1General}\label{ValueEstimateCP1}
For all $\Delta \in (0,1)$ and $a \in \C^n\backslash\{0 \} $ there
exists $N_{\Delta,|a|,m}$ and $c_{\Delta,|a|,m}>0 $, such that if
$N>N_{\Delta, |a|,m}$ then
$$\displaystyle{ \max_{z\in B(0,
\Delta )} |\psi_{\alpha,N}(z-a)| < (1+ |a|^2)^{\frac{N}{2}} (1-
\Delta)^{\frac{N}{2}}},$$ except for an event whose probability is
less than $e^{-c_{\Delta, |a|,m} N^{m+1}}$.\par

\end{lemma}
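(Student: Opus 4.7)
The plan is to exploit the SU$(m+1)$ invariance from Section 2 (equation~(2)) to convert the problem into an application of Lemma~\ref{GrowthRateCP1NR0} on a ball around the origin. By the rotational invariance of the coefficient distribution, I may assume $a = (|a|, 0, \ldots, 0)$. Applying equation~(2) with $\zeta = -|a|$ rewrites $\psi_{\alpha,N}$ in a translated orthonormal basis with new i.i.d.\ standard complex Gaussian coefficients $\alpha'_j$. Substituting $w = z-a$, using the identity $1 - |a|(z_1 - |a|) = (1+|a|^2)(1 - \tfrac{|a|z_1}{1+|a|^2})$, and then performing the M\"obius change of variables
$$\xi_1 = \frac{z_1}{1+|a|^2 - |a|z_1}, \qquad \xi_k = \frac{z_k\sqrt{1+|a|^2}}{1+|a|^2 - |a|z_1} \quad (k\geq 2),$$
which sends $z=0$ to $\xi=0$, I expect to arrive at the key factorization
$$\psi_{\alpha,N}(z-a) = \frac{(1+|a|^2 - |a|z_1)^N}{(1+|a|^2)^{N/2}} \, \tilde\psi_{\alpha'}(\xi(z)),$$
where $\tilde\psi_{\alpha'}(\xi) = \sum_j \alpha'_j\sqrt{N\choose j}\, \xi^j$ is another standard Gaussian random SU$(m+1)$ polynomial, distributionally identical to $\psi_{\alpha,N}$. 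The underlying cancellation is that with $P = 1+|a|^2 - |a|z_1$, the product $z_1^{j_1}\prod_{k\ge2}(z_k\sqrt{1+|a|^2})^{j_k}\cdot P^{N-|j|}$ equals $\xi^j\cdot P^N$, which collapses all the $j$-dependence from the exponent of $P$.

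Given this identity the rest is routine. For $z \in B(0,\Delta)$, the prefactor satisfies
$$\frac{|1+|a|^2 - |a|z_1|^N}{(1+|a|^2)^{N/2}} \geq (1+|a|^2)^{N/2}\left(1-\frac{|a|\Delta}{1+|a|^2}\right)^N \geq (1+|a|^2)^{N/2}(1-\Delta/2)^N,$$
using $|a|/(1+|a|^2) \leq \haf$. Since $\xi$ has non-degenerate Jacobian at $0$ (with smallest singular value $\sim 1/(1+|a|^2)$), for $\Delta$ small enough its image $\xi(B(0,\Delta))$ contains a Euclidean ball $B(0, c_{|a|}\Delta)$ with $c_{|a|}>0$ depending only on $|a|$. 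Applying Lemma~\ref{GrowthRateCP1NR0} to the standard random polynomial $\tilde\psi_{\alpha'}$ on $B(0, c_{|a|}\Delta)$ with parameter $\delta = O(\Delta^2)$ chosen so that $(1-\Delta/2)^2(1-\delta) \geq 1-\Delta$ yields, off an event of probability $\leq e^{-c_{\Delta, |a|, m}N^{m+1}}$,
$$\max_{z \in B(0,\Delta)} |\tilde\psi_{\alpha'}(\xi(z))| \geq \max_{\xi \in B(0, c_{|a|}\Delta)} |\tilde\psi_{\alpha'}(\xi)| \geq (1 + (c_{|a|}\Delta)^2)^{N/2}(1-\delta)^{N/2} \geq (1-\delta)^{N/2}.$$
Multiplying the prefactor lower bound by this last estimate delivers $\max_{B(0,\Delta)}|\psi_{\alpha,N}(z-a)| \geq (1+|a|^2)^{N/2}(1-\Delta)^{N/2}$ off the bad event, completing the proof.

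The main technical hurdle is the algebraic identity from the first paragraph: one must carefully execute the substitutions and verify the cancellation that turns the shifted basis expansion of equation~(2) into a standard Gaussian SU$(m+1)$ polynomial multiplied by a single rational prefactor. Once this identity is established, the conclusion is an immediate consequence of Lemma~\ref{GrowthRateCP1NR0} together with elementary estimates on the prefactor and on the M\"obius image $\xi(B(0,\Delta))$.
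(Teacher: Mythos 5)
Your proposal is correct and follows essentially the same route as the paper: both rest on the change-of-basis identity (2), which factors the translated polynomial as a M\"obius prefactor raised to the power $N$ times a fresh standard Gaussian SU($m+1$) polynomial, followed by an application of Lemma \ref{GrowthRateCP1NR0} on a small ball about the origin. The only (immaterial) difference is orientation: the paper applies Lemma \ref{GrowthRateCP1NR0} to $\psi_{\alpha,N}$ near $0$ and shows the M\"obius image of that small ball is \emph{contained} in a $\Delta$-ball about $-a$, whereas you apply it to the transformed polynomial and show the M\"obius image of $B(0,\Delta)$ \emph{contains} a small ball about the origin.
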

\begin{proof}As Gaussian random SU($m+1$) polynomials are rotationally
invariant, as a random process, with out loss of generality we
assume that a is of the form: $a= (\zeta_1, 0, \ldots, 0)$.
\par Let
$\delta=\frac{\Delta}{2+ 2 |\zeta|+2|\zeta|^2)}$.\\

By Lemma \ref{GrowthRateCP1NR0} and line (2), there exists
$c_{\Delta,m}>0 \ \text{and} \ N_{\Delta,|a|, m}$ such that if $N>
N_{\Delta,|a|, m}$ then, except for an event whose probability is
less than $e^{- c_{\Delta,|a|} N^{m+1}}$,
\par \begin{tabular}{rcl}
$\displaystyle{(1-\delta)^{\frac{N}{2}}}$& $ \leq$ &
$\displaystyle{\frac{\max_{B(0,\delta)}
|\psi_{\alpha,N}(z)|}{(1+\delta)^\frac{N}{2}} }$\\& $=$& $
\displaystyle{\max_{\partial B(0,\delta)}} \frac{\left| \sum
\alpha'_j \sqrt{N \choose j}\left(\frac{z_1-\zeta_1}{\sqrt{1+
|\zeta_1|^2}}\right)^{j_1}
\left(\frac{1+\overline{\zeta_1}z_1}{\sqrt{1+|\zeta|^2}}\right)^{N-|j|} \prod (z_k)^{j_k}\right|}{(1+\delta^2)^{\frac{N}{2}}}.$\\
\end{tabular}
 In
order to simplify this previous line, let
$$\phi(z)=\left(\frac{z_1-\zeta_1}{1+\overline{\zeta_1}z_1}, \frac{z_2
(\sqrt{1+|\zeta|^2})}{1+\overline{\zeta_1}z_1}, \ldots, \frac{z_m
(\sqrt{1+|\zeta|^2})}{1+\overline{\zeta_1}z_1}\right),$$ so that
we may rewrite the previous equation as:\par

\begin{tabular}{rrl}
$\displaystyle{(1-\delta)^{\frac{N}{2}} }$&$\leq$&$\displaystyle{\left(\max_{\partial B(0,\delta)} \frac{|1+\overline{\zeta_1} z_1|^N}{(1+ |\zeta_1|^2)^{\frac{N}{2}}(1+\delta^2)^{\frac{N}{2}}} \right) \left(\max_{ B(0,\delta)}\left| \psi_{\alpha',N}(\phi(z))\right|\right) }$\\

&$\leq$&$\displaystyle{\left( \frac{(1+|\zeta_1| \delta)^N}{((1+
|\zeta_1|)^2(1+\delta^2))^{\frac{N}{2}}} \right)
\left(\max_{B(-\zeta_1,(4+2
|\zeta_1|^2)\delta)} \left| \psi_{\alpha',N}(z)\right|\right)}$,\\
\end{tabular}\\
as the image of $\phi\mid_{B(0,\delta)}\subset B(-\zeta_1, (2+2
|\zeta_1|^2)\delta)$, since:\\
\begin{tabular}{rrl}
$\displaystyle{\max_{z\in \partial B(0,\delta)} \left|
\frac{z_1-\zeta}{1+ \overline{\zeta}z_1}+
\zeta\right|^2}$&$\displaystyle{ +}$&$\displaystyle{\Sum_{k>0}
\left| \frac{z_k
(\sqrt{1+|\zeta|^2})}{1+ \overline{\zeta}z_1} \right|^2}$\\
& $\displaystyle{=}$&$\displaystyle{ \max_{z\in
\partial B(0,\delta)} \left| \frac{z_1-\zeta +\zeta +
z_1|\zeta|^2}{1+ \overline{\zeta}z} \right|^2+ \Sum_{k>0} |z_k|^2
\left|
\frac{\sqrt{1+|\zeta|^2}}{1+ \overline{\zeta}z_1} \right|^2} $\\
& $\displaystyle{=}$&$\displaystyle{ \delta^2 \max_{z\in \partial B(0,\delta)} \left| \frac{ (-1 -\zeta^2)}{1+ \overline{\zeta}z_1} \right|^2 }$\\
& $\displaystyle{\leq}$&$\displaystyle{ 4\delta^2  (|\zeta|^2 +1
)^2}$\\
\end{tabular}\\

Rearranging the previous sets of equations we get the result:\\
\begin{tabular}{rrl}
$\displaystyle{\max_{ B(0,\Delta)} |\psi_{\alpha',N}(z-\zeta_1)| }
$

& $\displaystyle{\geq}$&$\displaystyle{
\frac{(1+|\zeta_1|^2)^{\frac{N}{2}}(1+\delta^2)^{\frac{N}{2}} }{
{(1+|\zeta_1| \delta)^N}  }
\cdot (1-\delta)^{\frac{N}{2}} } $ \\

& $\displaystyle{\geq}$&$\displaystyle{
(1+|\zeta_1|^2)^{\frac{N}{2}}  (1- (2+2|\zeta_1|)
\delta)^{\frac{N}{2}}
 } $ \\

&$\displaystyle{\geq}$&$\displaystyle{
(1-\Delta)^{\frac{N}{2}}(1+|\zeta_1|^2)^{\frac{N}{2}}} $
\end{tabular}\\
\end{proof}

\newsection{Second key lemma}\par
The goal of this section will be to estimate $\int_{S_r} \log
|\psi_{\alpha,N}(z)| d\mu_r(z) $, where $d\mu_r(z)$ is the
rotationally invariant probability measure of the sphere of radius
$r$, $S_r= \partial B(0,r)$, which will be accomplished when we
prove lemma \ref{SurfaceIntegralEstimateOnS1}, using the same
techniques as in \cite{SodinTsirelson05}. As $\log(x)$ becomes
unbounded near 0, we will first prove a deviation result for the
event where the $L^1$ norm of $\log|\psi_{\alpha,N}|$ is
significantly larger than its max on the same region.
\begin{lemma} \label{oakleyCP1}For all $r>0$ there exists $c_{m,r}, \ \text{and} \ N_m$ such that for all $N>N_m$,
$$\int_{S_r } |\log(|\psi_{\alpha,N} (z)|)| d \mu_r (z) \leq
  \left(\frac{3^{2m}}{2}+\haf\right) N  \log\left((2) (1+r^2)\right)$$ except for an event whose probability is $<e^{-c_{m,r}
  N^{m+1}}$.
\end{lemma}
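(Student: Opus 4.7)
The plan is to decompose $|\log|\psi_{\alpha,N}|| = (\log|\psi_{\alpha,N}|)^+ + (\log|\psi_{\alpha,N}|)^-$ and to bound each piece separately with high probability, with the positive part accounting for the ``$\frac{1}{2}$'' in the constant and the negative part for the ``$\frac{3^{2m}}{2}$''.

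For $(\log|\psi_{\alpha,N}|)^+$, an application of Lemma \ref{GrowthRateCP1NR0} with $\delta=1$ gives, outside an event of probability at most $e^{-a_{r,1,m} N^{m+1}}$, the pointwise bound $|\psi_{\alpha,N}(z)|\leq 2^{N/2}(1+r^2)^{N/2}$ on all of $B(0,r)$. Consequently
$$\int_{S_r} (\log|\psi_{\alpha,N}(z)|)^+ \, d\mu_r(z) \;\leq\; \tfrac{N}{2}\log\bigl(2(1+r^2)\bigr).$$

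The harder term is $(\log|\psi_{\alpha,N}|)^-$, which is unbounded near any zero of $\psi_{\alpha,N}$. I would cover $S_r$ by finitely many small balls $\{B(a_i,\rho)\}_{i=1}^{K}$ with centres $a_i\in S_r$ and overlap multiplicity bounded by $3^{2m}$ (a standard Vitali/lattice covering argument in $\R^{2m}\cong\C^m$, and the source of the $3^{2m}$ in the final constant). For each $i$, Lemma \ref{GrowthRateCP1General} applied at the point $-a_i$ produces, outside a set of probability $\leq e^{-c_i N^{m+1}}$, the lower bound $\max_{B(a_i,\rho')}|\psi_{\alpha,N}|\geq (1+r^2)^{N/2}(1-\rho')^{N/2}$ for a fixed $\rho'<\rho$; simultaneously, Lemma \ref{GrowthRateCP1NR0} applied on $B(0,r+\rho)$ provides a matching upper bound on $\max_{B(a_i,\rho)}|\psi_{\alpha,N}|$ of the same order. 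Writing $M_i$, $m_i$ for the resulting upper and lower bounds, one has $\log(M_i/m_i)=O_{r,\rho}(N)$, and shrinking $\rho$ pushes this constant arbitrarily close to $\tfrac{N}{2}\log(2(1+r^2))$.

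With these two-sided bounds on $\max|\psi_{\alpha,N}|$ in hand, the remaining deterministic step is the local estimate
$$\int_{S_r\cap B(a_i,\rho)} (\log|\psi_{\alpha,N}|)^- \, d\mu_r \;\leq\; C_m\,\log(M_i/m_i)\,\mu_r\bigl(S_r\cap B(a_i,\rho)\bigr).$$
In dimension one this is exactly the Sodin--Tsirelson inequality, combining Jensen's formula with a Cartan-type bound on the sublevel sets of a polynomial in terms of the number of its zeros. In several variables I would reduce to the one-variable case by restricting $\psi_{\alpha,N}$ to complex lines through a point $b_i\in B(a_i,\rho')$ at which $|\psi_{\alpha,N}(b_i)|$ nearly realises $m_i$, applying the one-dimensional estimate along each line, and then integrating against the unitary-invariant measure on the family of such lines; this dimensional reduction is the step I expect to be the main obstacle. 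Summing the local estimates over $i$, using the $3^{2m}$ overlap bound, and taking a union bound over the $O(\rho^{-(2m-1)})$ exceptional events (each of probability $\leq e^{-cN^{m+1}}$) then yields the claimed inequality with total exceptional probability $\leq e^{-c_{m,r}N^{m+1}}$.
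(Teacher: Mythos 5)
Your treatment of the positive part is fine (modulo the cosmetic point that Lemma \ref{GrowthRateCP1NR0} requires $\delta\in(0,1)$, so you should fix some $\delta<1$ and use $(1+\delta)^{N/2}<2^{N/2}$). The genuine gap is in the negative part: the local estimate
$$\int_{S_r\cap B(a_i,\rho)}(\log|\psi_{\alpha,N}|)^-\,d\mu_r\ \leq\ C_m\,\log(M_i/m_i)\,\mu_r\bigl(S_r\cap B(a_i,\rho)\bigr)$$
is essentially the whole content of the lemma, and you do not prove it. The proposed reduction to one variable by slicing with complex lines through a point $b_i$ is not carried out, and you yourself flag it as the main obstacle; as sketched it is problematic because a complex line meets $S_r$ in a real circle rather than a disk, so Jensen's formula or a Cartan-type bound on the line does not directly control the integral of $\log^-$ over that circle, and the passage from the invariant measure on lines back to $d\mu_r$ on the cap is unjustified. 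Moreover, even granting the local estimate with some constant $C_m$, summing over the cover yields $3^{2m}C_m\cdot\frac{N}{2}\log(2(1+r^2))$, which matches the claimed constant only if $C_m=1$; nothing in your argument forces that.

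The paper avoids all of this with a harmonic-majorant trick you should adopt. By the lower bound of Lemma \ref{GrowthRateCP1} applied on $B(0,\frac{r}{2})$, outside an event of probability $<e^{-cN^{m+1}}$ there is a single point $\zeta_0\in\partial B(0,\frac{r}{2})$ with $\log|\psi_{\alpha,N}(\zeta_0)|>0$. Since $\log|\psi_{\alpha,N}|$ is plurisubharmonic,
$$0\ \leq\ \log|\psi_{\alpha,N}(\zeta_0)|\ \leq\ \int_{S_r}P_r(\zeta_0,z)\,\log|\psi_{\alpha,N}(z)|\,d\mu_r(z),$$
so $\int P_r(\zeta_0,\cdot)\log^-\leq\int P_r(\zeta_0,\cdot)\log^+$. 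Because $\zeta_0$ lies at distance between $\frac{r}{2}$ and $\frac{3r}{2}$ from every $z\in S_r$, the Poisson kernel satisfies $\frac{2^{2m-2}}{3^{2m-1}}\leq P_r(\zeta_0,z)\leq 3\cdot 2^{2m-2}$, whence $\int\log^-\leq\frac{\max P_r}{\min P_r}\int\log^+=3^{2m}\int\log^+$. Combined with your bound $\int\log^+\leq\frac{N}{2}\log(2(1+r^2))$, this gives the lemma with the stated constant $\frac{3^{2m}}{2}+\haf$, using only the two events you already control and no covering or slicing.
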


\begin{proof}
By Lemma \ref{GrowthRateCP1}, there exists $c_{m,r}, \ \text{and}
\ N_{m,r}$ such that if $N>N_{m,r}$ then, with the exception of an
event whose probability is less than $e^{-c_{m,r} N^{m+1}}$, there
exists $\zeta_0 \in
\partial B(0,\frac{1}{2}r)$ such that $\log(|\psi_{\alpha,N} (\zeta_0)|) >
0$. This also implies that:
$$\int_{z\in S_r} P_r(\zeta_0, z) \log(|\psi_{\alpha,N}(z)|) d\mu_r (z)\geq \log(|\psi(\zeta_0)|)\geq
0,$$Where $P_r$ is the Poisson kernel for the sphere of radius r:
$P_r(\zeta, z)= r^{2m-2} \frac{r^2-|\zeta^2|}{|z-\zeta|^{2m}}$.
Hence,
$$\int_{z\in S_r} P_r(\zeta_0, z) \log^-(|\psi_{\alpha,N}(z)|) d\mu_r(z) \leq \int_{z\in S_r}
P(\zeta_0, z) \log^+(|\psi_{\alpha,N}(z)|) d\mu_r (z)$$
\par
Now given the event where
$$\displaystyle{\log\max_{B(0,r)}}|\psi_\alpha(z)|< \frac{N}{2}
\log \left((2) (1+r^2)\right),$$ (whose complement for $N>N_{m,r}$
has probability less than $e^{-c_m N^{m+1}}$), we may estimate
that
$$\displaystyle{ \int_{z\in S_r} \log^+(|\psi_{\alpha,N}(z)|) d \mu_r (z) \leq \frac{N}{2} \log \left((2) (1+r^2)\right)}.$$ Since
$\zeta_0 \in
\partial B(0, \haf r)$ and $z = r e^{i\theta}$, we have:  $
\frac{r^2}{4} \leq |z- \zeta_0 |^2 \leq \frac{9}{4} r^2 $. Hence,
by using the formula for the Poisson Kernel,
$$\displaystyle{\frac{2^{2m-2}}{3^{2m-1}} \leq P(\zeta_0, z)\leq 3
\cdot 2^{2m-2} }.$$ Putting the pieces together
proves the result:\\
$$\displaystyle{\int_{z\in S_r} P_r(\zeta_0,
z)\log^+}(|\psi_{\alpha,N}(z)|)d \mu_r \leq 3 \cdot 2^{2m-3} N
\log\left( 2(1+r^2)\right) $$
\begin{tabular}{rrl}
$\displaystyle{\int_{z\in S_r} \log^-(|\psi_{\alpha,N}(z)|) d \mu_r (z)}$&$\displaystyle{\leq}$& $\displaystyle{ \frac{1}{\min P(\zeta_0, z)} \int_{z\in S_r} P_r(\zeta_0, z)\log^+(|\psi_{\alpha,N}|) d\mu_r(z) }$\\
& $\displaystyle{\leq}$& $\displaystyle{
 \frac{3^{2m}}{2} N \log\left(2 (1+r^2)\right)} $ \\

\end{tabular}
 \end{proof}

We now arrive at the main result of this section:
\begin{lemma}\label{SurfaceIntegralEstimateOnS1} For all $r>0$ and for all $\Delta\in (0, 1)$ there exists $c_{\Delta,r,m}>0$ and $N_{\Delta,r,m} $ such
that for all $N>N_{\Delta,r,m}$,
$$\int_{z\in S_r}
\log(|\psi_{\alpha,N}(z)|) d \mu_r (z) > \frac{N}{2} \log
\left((1+r^2) (1-\Delta) \right),$$ except for an event whose
probability is less than $e^{-c_{\Delta,r,m} N^{m+1}} $.
\end{lemma}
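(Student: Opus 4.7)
The plan is to manufacture many points $z_1^\ast,\dots,z_K^\ast$ inside $B(0,r)$ at which $\log|\psi_{\alpha,N}|$ is already close to $\frac{N}{2}\log(1+r^2)$, apply the subharmonic Poisson inequality at each one, and then average so that the Poisson weight collapses to $1$ on $S_r$. The remaining discrepancy between the weighted and the unweighted integral will be absorbed using the $L^1$ bound of Lemma \ref{oakleyCP1}.

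Fix small parameters $\rho_1,\Delta_0,\delta$ depending only on $\Delta,r,m$, and set $\rho=r-\rho_1$. Choose a $\delta$-net $\{a_i\}_{i=1}^K$ of the sphere $S_\rho$; its cardinality $K$ depends on $\delta,r,m$ but not on $N$. With $\Delta_0\leq\rho_1/2$, applying Lemma \ref{ValueEstimateCP1} at each $a_i$ and taking a union bound over $i=1,\dots,K$ shows that, outside an event of probability at most $Ke^{-c_0N^{m+1}}\leq e^{-c_1N^{m+1}}$, for every $i$ there is a point $z_i^\ast\in B(a_i,\Delta_0)\subset B(0,r-\rho_1/2)$ with
$$\log|\psi_{\alpha,N}(z_i^\ast)|\geq L:=\frac{N}{2}\log\bigl((1+\rho^2)(1-\Delta_0)\bigr).$$
By subharmonicity of $\log|\psi_{\alpha,N}|$ on $B(0,r)$, the Poisson inequality gives $\int_{S_r}P_r(z_i^\ast,w)\log|\psi_{\alpha,N}(w)|d\mu_r(w)\geq L$ for each $i$, and averaging yields
$$\int_{S_r}\overline P(w)\log|\psi_{\alpha,N}(w)|\,d\mu_r(w)\geq L,\qquad\overline P:=\frac{1}{K}\sum_{i=1}^K P_r(z_i^\ast,\cdot).$$

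The crux of the argument is that $\overline P$ is uniformly close to $1$ on $S_r$. For each fixed $w\in S_r$, the map $\zeta\mapsto P_r(\zeta,w)$ is harmonic on $B(0,r)$, so the mean value property gives $\int_{S_\rho}P_r(\zeta,w)d\mu_\rho(\zeta)=P_r(0,w)=1$. Because every $z_i^\ast$ lies within $\Delta_0$ of a net point $a_i\in S_\rho$ and the whole configuration stays at distance $\geq\rho_1/2$ from $S_r$, both $P_r(\cdot,w)$ and its first derivatives are bounded on this region by constants depending only on $\rho_1,r,m$. A quadrature/Riemann-sum estimate then delivers
$$\|\overline P-1\|_{L^\infty(S_r)}\leq C_{r,m}\,\frac{\delta+\Delta_0}{\rho_1^{2m+1}},$$
which can be made arbitrarily small by taking $\delta$ and $\Delta_0$ to be suitable powers of $\Delta$ compared with $\rho_1$.

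On the additional event of probability $\leq e^{-c_2N^{m+1}}$ where Lemma \ref{oakleyCP1} yields $\int_{S_r}|\log|\psi_{\alpha,N}||d\mu_r\leq C'_{r,m}N$, the decomposition $\int\log|\psi_{\alpha,N}|d\mu_r=\int\overline P\log|\psi_{\alpha,N}|d\mu_r+\int(1-\overline P)\log|\psi_{\alpha,N}|d\mu_r$ gives
$$\int_{S_r}\log|\psi_{\alpha,N}|d\mu_r\geq L-C'_{r,m}\|\overline P-1\|_\infty N.$$
Now choose $\rho_1\sim\Delta(1+r^2)/r$ and $\Delta_0$ of order $\Delta$ to force $L\geq\frac{N}{2}\log((1+r^2)(1-3\Delta/4))$, and then choose $\delta$ and $\Delta_0$ small enough that $C'_{r,m}\|\overline P-1\|_\infty\leq\Delta/8$; a short algebraic check shows the right-hand side is $\geq\frac{N}{2}\log((1+r^2)(1-\Delta))$. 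The hardest step is the sup-norm estimate for $\overline P-1$: it requires careful Lipschitz bounds on $P_r(\cdot,w)$ on $\{|z|\leq r-\rho_1/2\}$ and a genuine quadrature approximation for the spherical mean $\int_{S_\rho}P_r(\zeta,\cdot)d\mu_\rho$ by the discrete net average. Because every parameter depends only on $\Delta,r,m$, the factor $K$ in the union bound is harmless for the exponential rate $e^{-cN^{m+1}}$.
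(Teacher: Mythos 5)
Your architecture is the same as the paper's: use Lemma \ref{ValueEstimateCP1} plus a union bound over boundedly many locations near an interior sphere to produce points where $\log|\psi_{\alpha,N}|$ is already close to $\frac{N}{2}\log(1+r^2)$, apply the sub-mean-value (Poisson) inequality at each, average so that the averaged Poisson kernel is close to $1$ on $S_r$, and absorb the discrepancy with the $L^1$ bound of Lemma \ref{oakleyCP1}. The probabilistic bookkeeping, the union bound with $N$-independent cardinality, and the final algebra are all fine (modulo tuning constants such as your choice $\rho_1\sim\Delta(1+r^2)/r$, which gives $(1+\rho^2)/(1+r^2)\approx 1-2\Delta$ and so needs an extra factor to land above $1-3\Delta/4$; that is harmless).

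The one step that fails as written is the sup-norm estimate for $\overline P-1$. You average the Poisson kernels with \emph{uniform} weights $1/K$ over an arbitrary $\delta$-net of $S_\rho$, and claim this approximates the spherical mean $\int_{S_\rho}P_r(\zeta,w)\,d\mu_\rho(\zeta)=1$. The $\delta$-net property (every point of $S_\rho$ within $\delta$ of some $a_i$) puts no constraint on how the net points are distributed: a $\delta$-net may contain half of its points clustered within $\delta$ of a single $p\in S_\rho$, in which case $\overline P(w)\approx\frac12+\frac12 P_r(p,w)$, and for $w$ the boundary point nearest $p$ one has $P_r(p,w)\asymp (r/\rho_1)^{2m-1}$, so $\|\overline P-1\|_\infty$ is large, not $O((\delta+\Delta_0)/\rho_1^{2m+1})$. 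Lipschitz bounds on $P_r(\cdot,w)$ alone cannot rescue a uniform-weight quadrature; you also need the sample points to carry weights matching the measure they represent. This is exactly what the paper does: it partitions $S_{\kappa r}$ into $Q$ cells $I_k$ of small diameter (a projected cube tiling), picks one sample point per cell, and weights it by $\mu_k=\mu_{\kappa r}(I_k)$, so that $\sum_k\mu_k P_r(\zeta_k,w)-1=\sum_k\int_{I_k}\bigl(P_r(\zeta_k,w)-P_r(\zeta,w)\bigr)d\mu_{\kappa r}(\zeta)$ is genuinely controlled by cell diameter times the Lipschitz constant of the kernel at distance $\rho_1$ from the boundary. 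Replace your uniform average by this measure-weighted one (or insist that your net consists of centers of an equal-measure partition), and the rest of your argument goes through.
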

\begin{proof}
It suffices to prove this result for small $\Delta$. Set
$\displaystyle{\delta= 3^{-4m}\Delta^{4m}}$. Let $s= \lceil
\frac{1}{\delta} \rceil$, let $Q=(2m) s^{2m-1}$, and let
$\displaystyle{\kappa=1-\delta^{\frac{1}{4m}}}$.\par In
\cite{Zrebiec06} it was shown that by projecting a tiling of the
$2m$ cube by $2m-1 $ cubes onto the sphere of radius $\kappa r$
one gets a partition consisting of $Q$ measurable disjoint sets
$\{I^{\kappa r}_1, I^{\kappa r}_2, \ldots, I^{\kappa r}_Q\}$ such
that
$$diam(I^{\kappa r_j})\leq \frac{\sqrt{2m-1}}{s}\kappa r=
  \frac{c_m}{Q^{\frac{1}{2m-1}}}\kappa r.$$ We choose such a partition and then we choose a $\zeta_j$ within $\delta r<1 $ of
 $I_j^{\kappa r}$ such that
$$\log (|\psi_{\alpha,N}(\zeta_j)|) >  \frac{N}{2} \log \left((1 +  \kappa^2 r^2) (1-\delta r)\right), \eqno{(3)}$$
for which, by Lemma \ref{GrowthRateCP1General}, there exists
$c'_{\Delta,r}$ and $N'_{\Delta,r}$ such that if $N>N_{\Delta,r}$
then the probability that this  does not occur is less than $
e^{-c'_{\delta,r} N^{m+1}}$. Therefore there exists
$c_{\Delta,r}>0$ and $N_{\Delta,r}$ such that if $N>N_\Delta $ the
union of these m events has probability less than or equal to
$$\left(2m \left\lceil \frac{1}{\delta} \right\rceil^{2m-1}\right)
e^{-c'_{\delta,r} N^{m+1}} < e^{-{c_\delta,r}N^{m+1}}.
\eqno{(4)}$$ Let $\mu_k ={\mu_{\kappa r} (I_k^{\kappa r})} $. As
$\{I_1^{\kappa r}, I_2^{\kappa r}, \ldots, I_Q^{\kappa r}\}$ form
a partition of $S_{\kappa r}$, $\Sum_k \mu_k= 1$.\par We now turn
to investigating the average of $\log |\psi_{\alpha,N}(z)|$
on the sphere of radius r by approximating said integral with a Riemann sum which makes use of line (3):\\
\\
\begin{tabular}{cl}
$\displaystyle{ \frac{N}{2} \log( (1 + \haf \kappa^2 r^2)}$ &
$\displaystyle{ (1-\delta))\leq \Sum_{k=1}^{k=Q} \mu_k
\log|\psi_{\alpha,N}(\zeta_k)| }$\\& $ \displaystyle{\leq \
\int_{z\in S_r} \left( \Sum_k \mu_k P_r(\zeta_k, z)
\log(|\psi_{\alpha,N} (z)|) d \mu_r (z)\right) }$
\\
& $\displaystyle{= \ \int_{z\in S_r} \left(\Sum_k \mu_k
(P_r(\zeta_k, z) -1) \right) \log(|\psi_{\alpha,N}(z)|) d \mu_r(z) }$\\
&$\displaystyle{ \ \ \ \  + \int_{z\in S_r} \log(|\psi_{\alpha,N}
(z)|) d \mu_r(z)}$
\\
\end{tabular}\\
This will simplify to:\\
\begin{tabular}{rl}$\int \log(|\psi_{\alpha,N}(z)|)
d \mu_r (z)\geq$&$\frac{N}{2}\log \left((1 + \kappa^2 r^2 ) (1-
\delta r)\right)$\\ &$ -  ( \int | \log|\psi_{\alpha,N}(z)|| d
\mu_r (z)) \displaystyle{\max_{z\in S_r}} |\sum_k \mu_k
(P_r(\zeta_k, z)-1) |$
\end{tabular}
\\

In \cite{Zrebiec06}, it was computed that in exactly this situation that:\\
$$\displaystyle{ \max_{ |z|=r} \left|  \sum_k \mu_k
(P_r(\zeta_k, z) -1 )\right| \leq C_m \delta^{\frac{1}{2(2m-1)
}}}$$

Hence by Lemma \ref{oakleyCP1} and line (4), there exists $c_{\delta, r, m}>0\ \text{and} \ N_{\delta,r,m}$ such that if $N>N_{\delta,r,m}$, except for an event of probability $<e^{-c_{\delta,r,m}N^{m+1}}$:\\
\begin{tabular}{rcl}$\displaystyle{\int \log(|\psi_{\alpha,N}|) d\mu_r (z)
}$ & $\displaystyle{\geq}$ & $\displaystyle{
\frac{N}{2}\log\left((1 + \kappa^2 r^2) (1- \delta r)\right)}$\\&&
$\ \ \displaystyle{ - C_m N
\log \left( 2(1+r^2) \right)  \delta^{\frac{1}{2(2m-1)}},}$\\

&$\displaystyle{=}$&$\displaystyle{ \frac{N}{2}\log\left((1+r^2)-2
\delta^{\frac{1}{4m}} r^2 +
O(r^2\delta^{\frac{1}{2m}}+\delta^{\frac{4m+1}{4m}} r^3+ \delta r)
\right)}$\\
&&$\ \ \displaystyle{ - C_m N
\log \left( 2(1+r^2) \right)  \delta^{\frac{1}{2(2m-1)}},}$\\

&$\displaystyle{ \geq}$&$\displaystyle{ \frac{N}{2}\log((1+r^2)
(1-3\delta^\frac{1}{4m}))},$ for sufficiently small $\delta$.
\end{tabular}\\
The proof is thus completed by choosing sufficiently small
$\Delta$ so that the previous line holds, (and $\delta r<1 $) .\\
\end{proof}

\newsection{Main Results}\par
We will now be able to estimate the value of the unitegrated
counting function for a random SU($m+1$) polynomial,
$\psi_{\alpha, N} $.

\begin{deff} For $f\in \holo (B(0,r)), \ f(0)\neq 0, \ B(0,r) \subset \C^m$, the
unintegrated counting function,\\
$n_{f}(r):= \int_{B(0,t)\bigcap Z_{f}} (\frac{i}{2 \pi}
\partial \overline{\partial} \log |z|^2)^{m-1}= \int_{B(0,t)}
(\frac{i}{2 \pi} \partial \overline{\partial} \log |z|^2)^{m-1}
\wedge \frac{i}{2 \pi} \partial \overline{\partial} \log |f| $
\end{deff}
The equivalence of these two definitions follows by the
Poincare-Lelong formula. The above form ($(\frac{i}{2 \pi}
\partial \overline{\partial} \log |z|^2)^{m-1} $) gives a
projective volume, with which it is more convenient to measure the
zero set of a random function. The Euclidean volume may be
recovered as $$\int_{B(0,t)\bigcap Z_{f}} \left(\frac{i}{2 \pi}
\partial \overline{\partial} \log |z|^2\right)^{m-1}=
\int_{B(0,t)\bigcap Z_{f}} \left(\frac{i}{2 \pi t^{2}} \partial
\overline{\partial} |z|^2\right)^{m-1}.$$

\begin{lemma}\label{Shabat}If $u \in L^1(\overline B _r), \
and \ \partial \overline{\partial}u$ is a measure, then
$$\int_{t=r\neq 0}^{t=R} \frac{dt}{t} \int_{B_t} \frac{i}{2 \pi} \partial
\overline{\partial} u \wedge (\frac{i}{2 \pi} \partial
\overline{\partial} \log |z|^2)^{m-1}= \frac{1}{2} \int_{S_R}u
d\mu_R - \frac{1}{2} \int_{S_r}u d\mu_r
$$
\end{lemma}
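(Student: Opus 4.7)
The plan is to proceed in three steps: reduce to smooth $u$, apply Fubini together with integration by parts, and identify the resulting boundary terms with spherical averages.

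First I would reduce to the case $u \in C^\infty(\overline{B}_R)$ by convolving with a standard mollifier. Both sides are linear and continuous in $u$ in the appropriate topology; after the Fubini step below, the left-hand side becomes a pairing of $\partial\overline{\partial}u$ (a measure) against a fixed current with locally integrable coefficients, so weak-$*$ convergence $\partial\overline{\partial}u_\epsilon \to \partial\overline{\partial}u$ completes the reduction.

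Set $\omega := \frac{i}{2\pi}\partial\overline{\partial}\log|z|^2$. By Fubini, the left-hand side equals
$$\int_{B_R} G_{r,R}(z)\, \frac{i}{2\pi}\partial\overline{\partial}u \wedge \omega^{m-1}, \qquad G_{r,R}(z) := \log\bigl(R/\max(r,|z|)\bigr),$$
a continuous function that vanishes on $S_R$, equals $\log(R/|z|)$ on the annulus $A = \{r<|z|<R\}$, and is the constant $\log(R/r)$ on $\overline{B}_r$. I would then split the integral into $A$ and $B_r$ and apply a Green-type double integration by parts on each piece. The algebraic miracle is that $\omega$ pulls back from the Fubini--Study form on $\P^{m-1}$ under $z \mapsto [z]$, hence $\omega^m \equiv 0$ on $\C^m \setminus \{0\}$; together with $\frac{i}{2\pi}\partial\overline{\partial} G_{r,R} = -\haf\omega$ on $A$ (and $0$ on $B_r$), the interior contribution after the double integration by parts vanishes identically.

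What remains are boundary terms on $S_r$ and $S_R$. The essential geometric identity is the coarea formula
$$\frac{i}{2\pi}(\overline{\partial}-\partial)\log|z|^2 \wedge \omega^{m-1}\big|_{S_t} = 2\, d\mu_t,$$
i.e.\ this boundary form restricted to the sphere of radius $t$ recovers twice the normalized rotation-invariant probability measure; this is the spherical mean-value property for the projective Laplacian and is checked by a direct polar-coordinate computation using the Hopf decomposition $S_t \to \P^{m-1}$ and the normalization $\int_{\P^{m-1}} \omega^{m-1} = 1$ (in $m=1$ one recovers the familiar $d\theta/\pi = 2\, d\mu_t$). Piecing together the boundary contributions: the $S_R$ piece reduces purely to $\haf\int_{S_R} u\, d\mu_R$ (the terms involving derivatives of $u$ are killed by $G_{r,R}|_{S_R}=0$); the $S_r$ pieces produced by the two subregions contain matching terms involving $\int_{S_r}(\overline{\partial} - \partial)u \wedge \omega^{m-1}$ that cancel, leaving $-\haf\int_{S_r} u\, d\mu_r$. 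Altogether this gives the right-hand side $\haf\int_{S_R} u\, d\mu_R - \haf\int_{S_r} u\, d\mu_r$. The main obstacle is verifying the coarea identity and bookkeeping the factors of $\haf$ coming from $\log|z|^2 = 2\log|z|$ and the $d^c$ convention — these together produce the overall factor $\haf$ on the right-hand side.
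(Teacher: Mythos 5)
Your argument is correct, but note that the paper does not actually prove this lemma --- it simply cites Griffiths and Harris, pp.~390--391, and your proposal is essentially a faithful reconstruction of that standard proof: Fubini producing the kernel $\log\bigl(R/\max(r,|z|)\bigr)$, the vanishing $\omega^m=0$ on $\C^m\setminus\{0\}$, and the normalization of $d^c\log|z|^2\wedge\omega^{m-1}$ on spheres (your $m=1$ sanity check $d\theta/\pi=2\,d\mu_t$ confirms the factor of $\haf$). So you have supplied, by the same route as the cited source, the proof the paper omits.
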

\par A proof of this result is available on page 390-391 of Griffiths
and Harris, \cite{GriffithsAndHarris}. Using this we
may now prove one of our two main theorems, Theorem \ref{MainCPN}: \\

\begin{proof} (of theorem \ref{MainCPN}). It suffices to prove the result for small $\Delta$. Let $\delta= \frac{\Delta^2}{4}<1 $. Let $\kappa = 1 +
 \sqrt{\delta}= 1+ \frac{\Delta}{2}
 $. As $n_{\psi_{\alpha,N}}(r)$ is increasing,
$$n_{\psi_{\alpha,N}}(r) \log(\kappa) \leq \int_{t=r}^{t=\kappa r}
n_{\psi_{\alpha,N}}(t)\frac{dt}{t} \leq n_{\psi_{\alpha,N}}(\kappa
r) \log(\kappa) \eqno{(3)}$$ \par

There exists $c_{\delta, r,m}>0 \ \text{and} \ N_{\delta, r,m}$ such that for all $N>N_{\delta,r,m}$, except for an event of probability $\displaystyle{\leq e^{-c_{\delta,r,m} N^{m+1}}}$, we get that:\\
\begin{tabular}{rl}
$\displaystyle{ n_{\psi_{\alpha,N}}(r) \log(\kappa)}$ &$\displaystyle{\leq \int_{S_{\kappa r}} \log| \psi_{\alpha,N} (z)| d\mu_{\kappa r}(z) - \int_{S_r} \log| \psi_{\alpha,N} (z)| d\mu_r(z)}$\\
& $\displaystyle{ \leq \frac{N}{2} \left( \log \left( (1+ \kappa^2
r^2) (1+\delta)\right) - \int_{S_r} \log |\psi_\alpha ( r
e^{i\theta})| d\mu_r \right) }$,\\& by Lemma \ref{GrowthRateCP1}.
\\ & $\displaystyle{ \leq
\frac{N}{2} \left( \log \left( (1+ \kappa^2 r^2 ) (1+\delta)
\right) - \log \left( (1+r^2)
(1-\delta)\right)\right)}$,\\&   by Lemma \ref{SurfaceIntegralEstimateOnS1}.\\
& $\displaystyle{ \leq \frac{N}{2}   \left( \frac{2\sqrt{\delta}
r^2+ \delta r^2 + 2 \delta +2\delta r^2  }{(1+r^2) } - \frac{2
\delta
r^4}{(1+r^2)^2} + O (\delta^\frac{3}{2}) \right) }$, \\
\end{tabular}\par
Therefore,\\
\begin{tabular}{rcl}$\displaystyle{n_{\psi_{\alpha,N}}(r)}$&$\leq$&$\displaystyle{  N  \left( \frac{
r^2+ \haf \sqrt\delta r^2 + \sqrt\delta + \sqrt{\delta}r^2
}{(1+r^2) } - \frac{ \sqrt \delta r^4}{(1+r^2)^2} + O (\delta)
\right)}$\\&& $\displaystyle{\ \ \ \ \ \ \ \ \ \ \ \ \ \ \ \ \ \ \
\ \cdot\left( 1 + \frac{\sqrt\delta}{2} +
O(\delta)\right) }$\\
&$\leq$&$\displaystyle{   \frac{N r^2}{1+r^2}+ 3 N \sqrt
\delta + O (\delta) }$\\
\end{tabular}

This proves the probability estimate when the value of the
unintegrated counting function $n_{\psi_{\alpha,N}}(r) $ is
significantly above its typical value. We now modify the above the argument to finish the proof. There exists $c_{\delta,r,m} \ \text{and} \ N_{\delta,r,m}$ such that if $N> N_{\delta,r,m}$ then, except for an event whose probability is less than $e^{-c_{\delta,r,m}N^{m+1}}$, the following inequalities hold: \\
\begin{tabular}{rl}
$\displaystyle{ n_{\psi_{\alpha,N}}(r)}$ &$\displaystyle{ { \log(\kappa)} \geq \int_{S_{r}} \log| \psi_{\alpha,N} (z)| d\mu_r(z) - \int_{S_\kappa^{-1}r} \log| \psi_{\alpha,N} (z)| d\mu_{\kappa^{-1} r}(z)}$\\
& $\displaystyle{ \geq \frac{N}{2} \left( \log \left( (1+ r^2)
(1-\delta)\right) - \int_{S_r} \log |\psi_\alpha ( r e^{i\theta})|
d\mu_r \right) }$, by Lemma
\ref{SurfaceIntegralEstimateOnS1}.\\
& $\displaystyle{ \geq \frac{N}{2} \left( \log \left( (1+ r^2 )
(1-\delta) \right) - \log \left( (1+\kappa^{-2}r^2)
(1+\delta)\right)\right)}$,\\& by Lemma \ref{GrowthRateCP1}.
\\
& $\displaystyle{ \geq \frac{N}{2} \left( \log  (1-\delta) -
 \log \left( 1- \frac{2 \sqrt{\delta} r^2}{1+r^2}+ \frac{\delta r^2}{1+r^2} +\delta+ O(\delta^{\frac{3}{2}}) \right)\right)}$.\\
\end{tabular}\par

Therefore,\\
\begin{tabular}{rcl}$\displaystyle{n_{\psi_{\alpha,N}}(r)}$&$\geq$&$\displaystyle{N \left( -\sqrt\delta + \frac{2 r^2}{1+r^2} - \frac{\sqrt \delta r^2}{1+r^2} - \sqrt\delta + \frac{2 \sqrt\delta r^4}{(1+r^2)^2} + O(\delta) \right)}$\\&& $\displaystyle{\ \ \ \ \ \ \ \ \ \ \ \ \ \ \ \ \ \ \
\ \cdot\left( 1 + \frac{\sqrt\delta}{2} +
O(\delta)\right) }$\\
&$\geq$&$\displaystyle{   \frac{N r^2}{1+r^2}- 2 N \sqrt
\delta + O (\delta) }.$\\
\end{tabular}

\end{proof}

We have just implicitly proven an upper bound on the order of the
decay of the hole probability. We will now compute the lower bound
to finish the proof theorem \ref{Hole probability CPN}

\begin{proof} (of theorem \ref{Hole probability CPN}) The desired
upper bound for the order of the decay of the hole probability is
a consequence of the previous theorem.
\par We must still prove the lower bound for the order of the decay of the hole probability, and we start this by
considering the event, $\Omega$ which consists of $\alpha_j$
where:\par\begin{tabular}{cl} & $\displaystyle{|\alpha_0|\geq 1}$\\
&$\displaystyle{|\alpha_j|< {N \choose j}^\frac{-1}{2}
N^{-m}r^{-|j|}}$.
\end{tabular}\\
If $\alpha\in \Omega$, then $\displaystyle{|\alpha_0| >
\sum_{|j|>0}^N |\alpha_j| {N \choose j}^\frac{1}{2} r^j}$. Hence
for all $ z \in B(0,r),\  \psi_{\alpha,N}(z)\neq 0
 \Rightarrow \Omega \subset Hole_{N,r} $. A lower bound for the probability of $\Omega$ will thus give a
 lower bound for the probability of $Hole_{N,r}$. First we restrict
ourselves to considering the Gaussian random variables, $\alpha_j$, for whose indices,
$j$, ${N \choose j} N^{-2m} r^{-2|j|}\leq 1$.\\
\begin{tabular}{rrl}$\displaystyle{Prob\left(\left\{ \ |\alpha_j|<{N \choose
j}^\frac{-1}{2} N^{-m} r^{-|j|} \right\}
\right)}$&$\displaystyle{\geq}$&$\displaystyle{
\frac{1}{2} \frac{1}{N^{2m}{N \choose j} r^{2|j|}}}$,\\
&&by Proposition \ref{Gauss}.\\
&$\displaystyle{=}$&$\displaystyle{ \frac{1}{2} \frac{(N-|j|)!
j!}{N^{2m} N!} r^{-2|j|}}$\\
&$\displaystyle{\geq}$&$\displaystyle{\frac{
(2\pi)^{\frac{m-1}{2}} }{ N^{2m} 2 (m+1)^{N+\frac{m}{2}} }
 r^{-2|j|} e^{\frac{1}{12}}}$
\\ &$\displaystyle{\geq}$& $\displaystyle{ e^{-(N+\frac{m}{2})\log(m+1)+c_m -
|j|\log(r) - 2m \log (N)}}$
\\ &$\displaystyle{\geq}$& $\displaystyle{ e^{-(N)\log((m+1)r^{\frac{|j|}{N}})+c'_m - 2m \log (N)}}$
\\ &$\displaystyle{\geq}$& $\displaystyle{ e^{-c_{m,r}(N)}}$
\end{tabular}
\par
Please note that the last inequality still holds even if
$r<\frac{1}{m+1}$ since
$$\displaystyle{Prob\left(\left\{ \ |\alpha_j|<{N \choose
j}^\frac{-1}{2} N^{-m} r^{-|j|} \right\} \right)}\approx
e^{c_{m,r}N}\geq e^{-c'_{m,r}N}$$

Whereas if for the index $j$, ${N \choose j} N^{-2m}r^{-2|j|}<1$
then
\par
\begin{tabular}{rcl}$\displaystyle{Prob\left(\left\{ \ |\alpha_j|<{N \choose
j}^\frac{-1}{2} N^{-m} r^{-|j|} \right\}
\right)}$&$\displaystyle{\geq}$&$\displaystyle{ Prob\left(\left\{
\ |\alpha_j|<1 \right\} \right)}$ \\&$\displaystyle{>}$&$
\displaystyle{\haf}$\\ &$\displaystyle{>}$ &$\displaystyle{
e^{-N\log(2)}} $
\end{tabular}

Further, $Prob(\{|\alpha_0|
> N \})= e^{-1}$. Hence, $\displaystyle{Prob (\Omega)\ \geq} \ e^{-c_{r,m} N^{m+1} }$\\

\end{proof}

\end{document}